\newcommand{\bc}{\begin{center}}
\newcommand{\ec}{\end{center}}
\newcommand{\be}{\begin{enumerate}}
\newcommand{\ee}{\end{enumerate}}
\newcommand{\beq}{\begin{equation}}
\newcommand{\eeq}{\end{equation}}
\newcommand{\bi}{\begin{itemize}}
\newcommand{\ei}{\end{itemize}}
\newcommand{\bd}{\begin{description}}
\newcommand{\ed}{\end{description}}
\newcommand{\ba}{\begin{array}}
\newcommand{\bea}{\begin{eqnarray*}}
\newcommand{\eea}{\end{eqnarray*}}
\newcommand{\ea}{\end{array}}
\newcommand{\bt}{\begin{tabular}}
\newcommand{\et}{\end{tabular}}
\newcommand{\bmi}{\begin{minipage}}
\newcommand{\emi}{\end{minipage}}
\newtheorem{thm}{Theorem}[section]
\newtheorem{defn}[thm]{Definition}
\newtheorem{lem}[thm]{Lemma}
\newtheorem{exa}[thm]{Example}
\newtheorem{rem}[thm]{Remark}
\newcommand{\Vector}[2]{\left(\begin{matrix} #1 \\ #2 \end{matrix} \right)}
\begin{document}

\bc {\bf\large Construction of a finite Dickson nearfield}\\[3mm]
{\sc Prudence Djagba  }

\it\small
Nelson Mandela University, Department of mathematics\\
South Africa\\
\rm e-mail: prudence@aims.ac.za
\ec
 
\normalsize

\quotation{\small {\bf Abstract:} For a Dickson pair $(q,n)$ we show that  $ \big \lbrace  \frac{q^k-1}{q-1}, 1 \leq k < n  \big  \rbrace $ forms a finite complete set of different residues modulo $n$. We  also study the construction   of a finite Dickson nearfield that arises from  Dickson pair $(q,n)$.

\small
{\it Keywords:  Dickson pair, Dickson nearfield} \\
\normalsize

\textup{2010} \textit{MSC}: \textup{16Y30;12K05}

\section{Introduction}

The interest in nearrings and nearfields started  in $1905$ when Leonard Eugene Dickson (\cite{dickson1905finite} ) wanted to know what structure arises if one axiom in the list of axioms for skew-fields (division rings) was removed. He found that there do exist "nearfields", which fulfill all axioms for skew-fields except one distributive law. Dickson achieved this by starting with a field and changing the multiplication into a new operation. In his honor, these types of nearfields are called "Dickson nearfields". In $1966$ the first type of near-vector spaces was introduced by Beidleman \cite{beidleman1966near} which generalises the concept of a vector space to a non-linear structure and  used nearring modules  over a nearfield. Following that, in his thesis, the authors in \cite{djagbathesis} has extended the theory of Beidleman near-vector spaces. In \cite{djagbahowell18,djagba} the authors described the $R$-subgroups of finite dimensional Beidleman near-vector spaces. Zassenhauss \cite{zassenhauss1935}, Karzel and Ellers \cite{ellerskarzel1964} have solved some important problems in this area. Recently the author in \cite{djagba} has investigated on the generalized distributive set of a finite nearfield. In his thesis, the authors in \cite{djagbathesis} has extended the theory of Beidleman near-vector spaces. In \cite{djagbahowell18,djagba} the authors described the $R$-subgroups of finite dimensional
Beidleman near-vector spaces and introduced the notion of $R$-dimension, $R$-basis, seed set and seed number of an $R$-subgroup. In \cite{djagbacenter} the authors gave an alternative proof of the center of a finite Dickson nearfield.\\

A nearfield is an algebraic structure similar to a skew-field sometimes called division ring, except that it has only one of the two distributive laws.  
\begin{defn}(\cite{pilz2011near})
	A nearfield is a set $N$ together with two binary operations $ + $ (addition) and $ \cdot  $ (multiplication) satisfying the following axioms :  
	\begin{itemize}
		\item $(N,+) $ is an abelian group with the identity $0$, 
		\item $ (N,\cdot) $ is a semi-group i.e.,  $ ( a \cdot b ) \cdot c=  a \cdot (b  \cdot c ) $ for all elements $ a,b,c \in N   $  (the associative law for multiplication),
		\item $ (a+b) \cdot c = a \cdot c + b \cdot c $  for all elements $ a,b,c \in N   $ (the right distributive law),
		\item $N $ contains an element $ 1 $ such that $  1 \cdot a= a \cdot 1 =a $  for all element $ a \in N   $ (multiplicative identity), 
		\item For every non-zero element $ a $ of $ N $, there exists an element $ a^{-1} $ such that $ a \cdot a^{-1} = a^{-1}  \cdot a =1$ (multiplicative inverse).
	\end{itemize}   
	\label{th:t8}
\end{defn}
We will use $N^{\times}$ to denote $N \backslash \lbrace 0\rbrace $.
\begin{defn}
	A proper nearfield is a nearfield that is not a field. 
\end{defn}

Throughout this note  we will consider right nearfields and use $N$ to denote a nearfield.
\begin{exa}
	Fields and skew-fields are nearfields.
\end{exa}
\begin{exa} \cite{pilz2011near} 
	Consider the finite field $\big  (GF({3^2}),+ , \cdot \big )$, it is explicitly constructed in the following way
	\begin{align*}
		GF({3^2}) \cong \mathbb{Z}_3[X]/ (X^2+1).
	\end{align*}  It follows that 
	$ GF({3^2}) :=\left\lbrace 0,1,2, \beta,1+\beta,2+\beta,2\beta,1+2\beta,2+2\beta \right\rbrace $ where $\beta$ is a zero of $X^2+1 \in \mathbb{Z}_3[X] $. 
	The addition table on $ GF({3^2})$ is defined by  
	\begin{align*}
		(a+b \beta) + (c+d \beta) = (a+c) \text{mod }3 + ((b+d) \text{mod }3) \beta
	\end{align*}

	It is observed in \cite{pilz2011near} that $ N_9:=(GF({3^2}),+ , \circ )$ with a new multiplication defined by
	\begin{equation*}
		x \circ y =
		\begin{cases}
			x \cdot y \thickspace \text{if $y$}  \thickspace \mbox{is a square in}  \thickspace (GF({3^2}),+ , \cdot )  \\
			x^3 \cdot y \thickspace \mbox{otherwise}
		\end{cases}
	\end{equation*}
	is a finite proper nearfield.

	\label{th:t10}
\end{exa} 

We will see in the next section that this example of a finite nearfield is a finite Dickson nearfield. As we will prove later, it is the smallest finite proper nearfield.

\begin{defn}
	The map 
	\begin{eqnarray*}
		\psi \thickspace : F \to F  \\
		a \mapsto a^p 
	\end{eqnarray*}is called the Frobenius automorphism of $ F $.
	\label{t:7}
\end{defn}

Now, we introduce maps that are useful to define a new multiplication.
\begin{defn}(\cite{pilz2011near})
	Let $N$ be a nearfield and $\textit{Aut} (N,+,\cdot ) $ the set of all automorphisms of $N$. A map
	\begin{align*}
		\phi: \quad & N^{\times} \to  \textit{Aut} (N,+,\cdot )  \\ 
		& n \mapsto \phi_n
	\end{align*}
	is called a coupling map if for all $n,m \in N^{\times}, \phi _n \circ \phi_m= \phi _{ \phi _n (m) \cdot n}.$
\end{defn}

\begin{defn}(\cite{pilz2011near})
	Let $N$ be a nearfield and $\phi$ a coupling map on $N$. Then one defines a new binary operation on $N$ by 
	\begin{align*}
		n \circ_{\phi} m=
		\begin{cases}
			\phi _m (n) \cdot m  \thickspace \text{if $   m \neq 0$} \\ 
			0  \thickspace \text{if $m = 0$}.
		\end{cases}
	\end{align*}
	To see this, let $m,n \in N$, then if $m=0$, $n \circ_{\phi} m=0$.  If $m \neq 0, \phi_m(n) \in N$ and $ m \in N^{\times}$ so $\phi _m (n) \cdot m \in   N^{\times}$. It follows that $  n \circ_{\phi} m \in N$.  Thus $N$ is closed under the new operation.
	\label{t:5}
\end{defn}

\begin{lem}(\cite{pilz2011near})
	Let $N$ be a nearfield and $ \phi$ be a coupling map. Then the set
	\begin{align*}
		G= \lbrace  \phi _n: n \in N^{\times} \rbrace
	\end{align*} 
	is a group under composition of maps.
	\label{lemm}
\end{lem}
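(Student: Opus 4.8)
The plan is to exhibit $G$ as a subgroup of the symmetric group $\mathrm{Sym}(N)$ of all bijections of $N$, so that associativity of composition comes for free and I need only check that $G$ is nonempty, closed under composition, and closed under taking inverses. Since each $\phi_n$ is an automorphism of $(N,+,\cdot)$, it is in particular a bijection of $N$, so $G \subseteq \mathrm{Sym}(N)$ from the outset.

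For closure I would invoke the defining identity of a coupling map, $\phi_n \circ \phi_m = \phi_{\phi_n(m)\cdot n}$, and argue that the new index $\phi_n(m)\cdot n$ again lies in $N^{\times}$, so that the right-hand side is genuinely an element of $G$. The two factors are nonzero: $n \in N^{\times}$ by hypothesis, and $\phi_n(m) \neq 0$ because $\phi_n$ is an injective additive map with $\phi_n(0)=0$ while $m \neq 0$. That their product is again nonzero is the one place where I use that $N$ is a nearfield rather than a mere near-ring: if $\phi_n(m)\cdot n = 0$, then multiplying on the right by $n^{-1}$ and using associativity together with $0 \cdot n^{-1} = 0$ (which follows from the right distributive law applied to $0+0$) would force $\phi_n(m)=0$, a contradiction.

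Next I would locate the identity. The natural candidate is $\phi_1$, and I claim $\phi_1 = \mathrm{id}_N$. Since $\phi_1$ is an automorphism it fixes the multiplicative identity, $\phi_1(1)=1$; feeding $n=m=1$ into the coupling identity then gives $\phi_1 \circ \phi_1 = \phi_{\phi_1(1)\cdot 1} = \phi_1$, and because $\phi_1$ is a bijection this idempotence forces $\phi_1 = \mathrm{id}_N \in G$. For inverses I would solve $\phi_n \circ \phi_m = \phi_1$ for $m$: by the coupling identity this amounts to requiring $\phi_n(m)\cdot n = 1$, i.e. $\phi_n(m) = n^{-1}$, which is solved by $m = \phi_n^{-1}(n^{-1}) \in N^{\times}$. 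For this $m$ one gets $\phi_n \circ \phi_m = \mathrm{id}_N$, and since $\phi_n$ is a bijection its right inverse in $\mathrm{Sym}(N)$ is its genuine inverse, so $\phi_n^{-1} = \phi_m \in G$. With nonemptiness, closure under composition, and closure under inverses established, the subgroup criterion shows $G$ is a group.

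I expect the only genuinely delicate point to be the closure step, specifically the verification that $N$ has no zero divisors so that the index $\phi_n(m)\cdot n$ stays inside $N^{\times}$; everything else is bookkeeping once the coupling identity is in hand. A minor alternative for the inverse step, avoiding any reference to $\mathrm{Sym}(N)$, is to observe that composition is already associative and $\mathrm{id}_N$ is a two-sided identity, so producing a right inverse for every element of the monoid $G$ is enough: if $ab=e$ and $bc=e$ then $ba = ba(bc) = b(ab)c = bec = bc = e$, so the right inverse $b$ is automatically a two-sided inverse.
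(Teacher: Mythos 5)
Your proof is correct, but there is nothing in the paper to measure it against: the paper states this lemma with only a citation to Pilz and supplies no proof of its own; the remark that follows merely records that $(G,\circ)$ is a subgroup of $(\mathrm{Aut}(N),\circ)$, which is precisely what your argument establishes. Your three verifications are the right ones and are carried out correctly. For closure, you use the coupling identity and check that the index $\phi_n(m)\cdot n$ stays in $N^{\times}$; your derivation uses only $0\cdot c=0$, which indeed follows from the right distributive law (the other absorption law $c\cdot 0=0$ is the one that would need more care in a right nearfield, and you correctly avoid it). For the identity, the observation that $\phi_1\circ\phi_1=\phi_{\phi_1(1)\cdot 1}=\phi_1$ forces $\phi_1=\mathrm{id}_N$, since $\phi_1$ is a bijection, is exactly the standard argument. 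For inverses, solving $\phi_n(m)\cdot n=1$ gives $m=\phi_n^{-1}(n^{-1})$, and you are right to note that $\phi_n^{-1}$ exists as a bijection independently of whether it lies in $G$, so $m\in N^{\times}$ is well defined and $\phi_m=\phi_n^{-1}\in G$. One cosmetic simplification: since each $\phi_n$ lies in $\mathrm{Aut}(N,+,\cdot)$ by the definition of a coupling map, you may as well run the subgroup criterion inside $\mathrm{Aut}(N)$ rather than $\mathrm{Sym}(N)$; that aligns your proof exactly with the paper's remark and changes nothing in the details.
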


\begin{rem} \quad 
	\begin{itemize}
		\item  $(G, \circ)$ is a subgroup of $( \textit{Aut}(N),\circ)$.
		\item  $(G, \circ)$ is called a Dickson-group.
	\end{itemize}
\end{rem}
\begin{thm}(\cite{pilz2011near})
	Let $N$ be a nearfield and $ \phi$ be a coupling map on $N$. Then $ (N, +, \circ_{\phi})$ is again a nearfield where $ \circ_{\phi}$ is defined as in Definition \ref{t:5}.
\end{thm}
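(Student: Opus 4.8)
The plan is to verify, one axiom at a time, that $(N,+,\circ_{\phi})$ meets every requirement of Definition~\ref{th:t8}. Since the additive structure is untouched, $(N,+)$ remains an abelian group with identity $0$, so all the work lies in the multiplicative axioms. A convenient preliminary step is to show that $\phi_1=\mathrm{id}$: setting $n=m=1$ in the coupling identity $\phi_n\circ\phi_m=\phi_{\phi_n(m)\cdot n}$ and using that every automorphism fixes $1$ gives $\phi_1\circ\phi_1=\phi_1$, whence $\phi_1=\mathrm{id}$ because $\phi_1$ is bijective. I will use this fact when identifying the multiplicative identity. Closure under $\circ_{\phi}$ has already been recorded in Definition~\ref{t:5}, so I can take it as given.

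The heart of the proof, and the step I expect to be the main obstacle, is associativity of $\circ_{\phi}$. First I would dispose of the degenerate cases in which one of the three arguments is $0$; here one only needs that $\phi_c(0)=0$ and that the definition sends a $0$ factor to $0$, so both sides collapse to $0$. For the principal case $a,b,c\in N^{\times}$ I would compute both sides explicitly. On one side, $(a\circ_{\phi}b)\circ_{\phi}c=\phi_c(\phi_b(a)\cdot b)\cdot c$, and since $\phi_c$ is an automorphism of $(N,+,\cdot)$ it is multiplicative, giving $(\phi_c\circ\phi_b)(a)\cdot\phi_c(b)\cdot c$. On the other side, $a\circ_{\phi}(b\circ_{\phi}c)=\phi_{\phi_c(b)\cdot c}(a)\cdot\phi_c(b)\cdot c$, and the coupling identity with $n=c$, $m=b$ turns $\phi_{\phi_c(b)\cdot c}$ into $\phi_c\circ\phi_b$. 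The two expressions now coincide by associativity of $\cdot$ in $N$. This is exactly the computation the coupling condition was designed to make work, so the real content is recognising how to match the subscript $\phi_c(b)\cdot c$ to the pattern in the coupling identity.

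The remaining axioms are comparatively routine. For the right distributive law I would again treat $c=0$ separately and, for $c\neq 0$, use that $\phi_c$ is additive together with right distributivity in $N$ to get $(a+b)\circ_{\phi}c=(\phi_c(a)+\phi_c(b))\cdot c=\phi_c(a)\cdot c+\phi_c(b)\cdot c=a\circ_{\phi}c+b\circ_{\phi}c$. For the multiplicative identity I would verify that $1$ works: $a\circ_{\phi}1=\phi_1(a)\cdot 1=a$ by the preliminary step, and $1\circ_{\phi}a=\phi_a(1)\cdot a=a$ since $\phi_a$ fixes $1$. Finally, for inverses of a nonzero $a$, I would exhibit the explicit left inverse $y=\phi_a^{-1}(a^{-1})$, checking that $y\circ_{\phi}a=\phi_a(y)\cdot a=a^{-1}\cdot a=1$; since $\circ_{\phi}$ is now known to be associative with identity $1$, $(N^{\times},\circ_{\phi})$ is a monoid in which every element has a left inverse, and the standard argument (if $ba=1$ and $cb=1$ then $ab=(cb)(ab)=c(ba)b=cb=1$) promotes each left inverse to a two-sided inverse. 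Collecting these facts shows $(N,+,\circ_{\phi})$ is a nearfield.
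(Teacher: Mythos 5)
Your proof is correct, but there is nothing in the paper to compare it against: the paper states this theorem with a citation to \cite{pilz2011near} and gives no proof at all, so your argument supplies what the paper omits. What you wrote is the standard (and essentially the only) argument, and it is complete: the reduction of everything to the multiplicative axioms, the observation $\phi_1=\mathrm{id}$ obtained from the coupling identity at $n=m=1$, the case split on zero arguments, the central associativity computation in which the coupling identity $\phi_c\circ\phi_b=\phi_{\phi_c(b)\cdot c}$ is exactly what matches the two sides, the right distributive law from additivity of $\phi_c$, and the promotion of the explicit left inverse $\phi_a^{-1}(a^{-1})$ to a two-sided inverse via the monoid argument are all sound. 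Two small points you leave implicit, both harmless: in the associativity step you expand $a\circ_{\phi}(b\circ_{\phi}c)$ as $\phi_{b\circ_{\phi}c}(a)\cdot(b\circ_{\phi}c)$, which requires $b\circ_{\phi}c\neq 0$ (both so that the nonzero clause of Definition \ref{t:5} applies and so that the subscript of $\phi$ lies in $N^{\times}$, where $\phi$ is defined); this follows from the fact that in a nearfield a product of nonzero elements is nonzero, which is the closure observation recorded in Definition \ref{t:5} that you invoke. Similarly, your inverse $y=\phi_a^{-1}(a^{-1})$ is nonzero for the same reason, so it genuinely lies in the monoid $(N^{\times},\circ_{\phi})$. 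Neither gap is substantive, and your proof stands on its own.
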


\vspace{10mm}

\section{Dickson construction}
The first finite proper nearfield was discovered by L.E Dickson \cite{dickson1905finite}.  He constructed the first example of a finite Dickson nearfield. His technique was to ''distort'' the multiplication of a finite field.
\begin{defn}(\cite{pilz2011near})
	Let $(N,+, \cdot)$ be a nearfield and $ \phi$ a coupling map on $N^{\times }$. Then $ (N,+,\circ _{ \phi})$ is called $\phi-$derivation of $(N,+, \cdot)$ and is denoted by $ N^{ \phi}$. 
	The  group $(G,\circ)$ is called the Dickson group of $ \phi$ with $G$ defined as in Lemma \ref{lemm}. $N$ is said to be a Dickson nearfield if $N$ is the $\phi-$derivation of some field $F$, i.e., $N=F^{\phi}.$ 
	\label{t:44}
\end{defn}
\begin{rem}
	Let us consider the coupling map $\phi: n \mapsto id_N$. In this case
	\begin{align*}
		n \circ_{\phi} m=
		\begin{cases}
			\phi _m (n) \cdot m = id_N(n) \cdot m =n \cdot m  \quad \text{if $ m \neq 0$} \\
			0 \quad \text{if $m = 0$}
		\end{cases}
	\end{align*}
	It is the trivial coupling map because the new operation is the same as the usual multiplication. For this coupling map we have that:
	\begin{itemize}
		\item Let $(N,+,\cdot)$ be a proper nearfield. The $\phi-$derivation of $(N,+,\cdot)$ is $(N,+,\circ _{\phi})$ i.e., $N^{\phi} =N$ is also a nearfield but not a Dickson nearfield. 
		\item Let $(F,+,\cdot)$ be a field. The $\phi-$derivation of $(F,+,\cdot)$ is $(F,+,\circ _{\phi})$ i.e., $F^{\phi} =F$. It follows that every field is a Dickson nearfield. 
	\end{itemize}
\end{rem}

We would like to construct finite Dickson nearfields. 
\begin{defn}(\cite{pilz2011near})
	A pairs of numbers $(q,n) \in \mathbb{N}^2$ is called a Dickson pair if
	\begin{itemize}
		\item $q$ is some power $p^l$ of a prime $p$,
		\item Each prime divisor of $n$ divides $q-1$,
		\item If $q \equiv 3$ $ \text{mod } 4$ implies $4$ does not divide $n$.
	\end{itemize} 
\end{defn}
\begin{exa} 
	The following pairs are Dickson numbers: $ (13,6),(7,3),(5,2),(9,2),(3,2),(4,3), $
	
	$(5,2),(5,4),(7,2),(11,2),(23,2), (59,2),(p,1)$ for $p$ prime. 
\end{exa}

\begin{lem}
	The set $ \big \lbrace  \frac{q^k-1}{q-1}, 1 \leq k < n  \big  \rbrace $ residues modulo $n$ is the set $\big \lbrace i, 0 \leq i < n \big  \rbrace$ where $(q,n)$ are Dickson pairs.
\end{lem}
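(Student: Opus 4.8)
The plan is to reduce the whole statement to the single divisibility equivalence
\[
 n \mid s_m \iff n \mid m, \qquad (\star)
\]
where I write $s_m := \frac{q^m-1}{q-1} = 1 + q + \cdots + q^{m-1}$ for $m \ge 0$ (so $s_0 = 0$), and then to conclude by counting. Once $(\star)$ is available, I will show that the $n$ numbers $s_0, s_1, \ldots, s_{n-1}$ are pairwise incongruent modulo $n$; being $n$ distinct classes, they must exhaust $\{0, 1, \ldots, n-1\}$, and as $s_0 = 0$ this is precisely the claimed set (one may equally list $s_1, \ldots, s_n$, since $(\star)$ gives $s_n \equiv s_0 \equiv 0$).

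Two elementary remarks carry out this reduction. First, every prime dividing $n$ divides $q-1$, so no prime divides both $q$ and $n$ and hence $\gcd(q,n) = 1$. Second, for $0 \le j < k$ one has $s_k - s_j = q^j\, s_{k-j}$, which combined with $\gcd(q,n)=1$ yields
\[
 s_k \equiv s_j \pmod n \iff n \mid s_{k-j}.
\]
For indices $j < k$ in $\{0, 1, \ldots, n-1\}$ the difference satisfies $k - j \in \{1, \ldots, n-1\}$, so $(\star)$ gives $n \nmid (k-j)$ and therefore $n \nmid s_{k-j}$; the residues are thus all distinct, as wanted.

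It remains to prove $(\star)$, which I would do prime by prime using the $p$-adic valuation $v_p$ and the relation $v_p(s_m) = v_p(q^m - 1) - v_p(q-1)$. For an odd prime $p \mid n$ we have $p \mid q-1$, so the Lifting-the-Exponent lemma gives $v_p(q^m - 1) = v_p(q-1) + v_p(m)$ and hence $v_p(s_m) = v_p(m)$. Thus $p^a \mid s_m \iff p^a \mid m$ for every prime power $p^a$ exactly dividing $n$, and by the Chinese Remainder Theorem this already proves $(\star)$ when $n$ is odd.

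The delicate case, and the one that uses the third defining property of a Dickson pair, is $p = 2$ with $n$ even (so $q$ is odd). If $q \equiv 1 \pmod 4$ the even form of Lifting-the-Exponent applies verbatim and again gives $v_2(s_m) = v_2(m)$. If $q \equiv 3 \pmod 4$ that lemma fails, but the Dickson condition forces $4 \nmid n$, so $v_2(n) \le 1$ and I only need the weak equivalence $2 \mid s_m \iff 2 \mid m$; this follows from $s_m \equiv m \pmod 2$, itself a case of the congruence $s_m \equiv m \pmod p$ for any $p \mid q-1$ (each $q^i \equiv 1$, so the $m$-term sum is $\equiv m$). Merging the odd-prime and $p=2$ analyses through the Chinese Remainder Theorem yields $(\star)$, and the counting argument completes the proof. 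I expect this $p=2$ valuation bookkeeping to be the main obstacle, as it is the only step genuinely sensitive to the arithmetic encoded in the Dickson conditions.
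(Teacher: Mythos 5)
Your proof is correct, and its overall skeleton matches the paper's: both arguments reduce distinctness of the residues to the single implication $n \mid s_t \Rightarrow n \mid t$ for $1 \le t < n$ (via $s_l - s_k = q^k s_{l-k}$ together with $\gcd(q,n)=1$), and both then work one prime of $n$ at a time. The genuine difference is in how that implication is established, and there your route is not just a stylistic variant --- it is more complete than the paper's. The paper writes $q = 1+p\epsilon$, expands $\frac{q^t-1}{q-1} = \sum_{k\ge 1}\binom{t}{k}(p\epsilon)^{k-1}$, and runs a case analysis on the exponent $m$ of $p$ in $n$ ($m=1$, $m=2$, then ``the same approach'' for general $m$). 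That argument silently breaks at the prime $2$: the step ``$p \mid t \Rightarrow p \mid \binom{t}{2}$'' is false for $p=2$ and $t \equiv 2 \pmod 4$ (e.g.\ $\binom{2}{2}=1$), and the paper never invokes the third Dickson condition at all, even though the lemma is false without it: for $(q,n)=(3,4)$ one has $s_2 \equiv s_4 \equiv 0 \pmod 4$, so the residues do not exhaust $\{0,1,2,3\}$. Your lifting-the-exponent identity $v_p(s_m)=v_p(m)$ (for odd $p \mid q-1$, and for $p=2$ when $q \equiv 1 \pmod 4$), combined with the explicit appeal to $4 \nmid n$ and the parity congruence $s_m \equiv m \pmod 2$ when $q \equiv 3 \pmod 4$, supplies exactly the missing $p=2$ analysis and makes visible where each Dickson axiom is used; your CRT merging likewise replaces the paper's unexplained ``assume without loss of generality $n=p^m$.'' What the paper's binomial expansion buys is elementarity (no appeal to LTE), but since the valuation identity is itself provable by a short induction, nothing substantial is lost in your version. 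Finally, both you and the paper quietly repair the statement's off-by-one range (the index should run to $k=n$, or $s_0=0$ be included, for the residues to cover all of $\{0,1,\ldots,n-1\}$); your remark that $s_n \equiv s_0 \equiv 0$ handles this explicitly and correctly.
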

\begin{proof}
	Let $i(k)=\frac{q^k-1}{q-1} $ for $k=1,\cdots,n.$
	
	We would like to show that the set $ \big  \lbrace i(1), i(2), \cdots, i(n)  \big  \rbrace $ residues modulo $n$ is the set $ \lbrace 0,1, \cdots, n-1 \rbrace$. It suffice to show that  the set $ \big \lbrace  \frac{q^k-1}{q-1}, 1 \leq k < n  \big  \rbrace $  are distinct residues modulo $n$.
	
	Suppose that 
	\begin{align}
		& \frac{q^k-1}{q-1} \equiv \frac{q^l-1}{q-1} \quad \text{mod } n \quad 1\leq k <l <n.
		\label{thd}
	\end{align}
	This implies that
	\begin{align*}
		&  1+q+\cdots + q^{k-1} \equiv 1+q+\cdots + q^{l-1} \quad \text{mod } n \\
		& \Rightarrow q^k+\cdots + q^{l-1} \equiv 0 \text{mod } n  \\
		& \Rightarrow  q^k(1+ \cdots+q^{l-k-1}) \equiv  0 \text{mod } n  \\
	\end{align*}
	By the definition of Dickson pair every prime divisor $p$ of $n$ divide $q-1$, so $p$ does not divide $q$. It follows that $\text{gcd }(q,n)=1$. Therefore
	\begin{align*}
		q^k(1+ \cdots+q^{l-k-1}) \equiv  0 \quad  \text{mod } n & \Rightarrow 1+ \cdots+q^{l-k-1}  \equiv 0 \quad \text{mod } n \\
		& \Rightarrow  \frac{q^{l-k}-1}{q-1} \equiv 0 \quad \text{mod } n. 
	\end{align*}
	Assume that $\frac{q^{t}-1}{q-1} \equiv 0 \quad \text{mod } n $ for some $ 1 \leq t < n.$ It follows that for all $i$,
	\begin{align*}
		\frac{q^{t}-1}{q-1} \equiv 0 \quad \text{mod } p_i^{\alpha_i} 
	\end{align*}
	where $n = \prod p_i^{\alpha_i}$ is the unique prime factorisation.
	
	Assume without loss of generality that $n=p^m.$
	
	We know that $ q \equiv 1 \quad \text{mod } p $. So we can write $q=1+p \epsilon$ for some $ \epsilon \in \mathbb{N}.$
	
	Assuming that $ p^m$ divides $ \frac{q^t-1}{q-1},$ we want to show that $n =p^m$ divides $t$ leads to contradiction.
	
	In fact
	\begin{align*}
		q^t = (1+ p \epsilon)^t = \sum _{k=0}^{t} \Vector {t}{k} (p \epsilon)^k.
	\end{align*}
	Hence 
	\begin{align*}
		\frac{q^t-1}{q-1} =  \sum _{k=1}^{t} \Vector {t}{k} (p \epsilon)^{k-1} = \cdots + \Vector{t}{2}p \epsilon + t.
	\end{align*}
	For instance 
	\begin{itemize}
		\item if $m=1,$  then the assumption is 
		\begin{align*}
			p \big  /  \frac{q^t-1}{q-1} \Leftrightarrow p \big / \sum _{k=1}^{t} \Vector {t}{k} (p \epsilon)^{k-1} \Leftrightarrow p /t
		\end{align*}
		leads to contradiction since $p=n> t$.
		\item if $m=2$, 
		
		\begin{align*}
			p^2 \big  /  \frac{q^t-1}{q-1} \Leftrightarrow p^2 \big / \sum _{k=1}^{t} \Vector{t}{k} (p \epsilon)^{k-1}  
			\Leftrightarrow p \big  /  \Vector{t}{2} p \epsilon +t \Rightarrow p /t 
		\end{align*}
		
		But then $\Vector{t}{2} = \frac{t(t-1)}{2}$, so $ p \big / \Vector{t}{2}$. Hence $p^2 \big / \Vector{t}{2} p \epsilon.$ Thus $p^2 /t$ leads to contradiction.
		\item By the same approach for some $m$, $p^m \big /  \frac{q^t-1}{q-1} \Rightarrow n=p^m / t $ leads to contradiction.
		
		Therefore the assumption \ref{thd} can not hold. Thus the set $ \big \lbrace  \frac{q^k-1}{q-1}, 1 \leq k < n  \big  \rbrace $  are distinct residues modulo $n$.
		
	\end{itemize}

\end{proof}

We will see in the next theorem that for each pair of Dickson numbers, we will be able to construct a finite Dickson nearfield containing $q^n$ elements. For any Dickson pair $(q,n)$, we will denote the associated Dickson nearfield by $DN(q,n)$. 
\begin{thm} (\cite{pilz2011near})
	
	For all pairs of Dickson numbers $(q,n)$, there exists some associated finite Dickson nearfields, of order $q^n$ which arise by taking the Galois Field $GF(q^n)$ and changing the multiplication such that $DN(q,n)= GF(q^n) ^ { \phi} = \big ( GF(q^n), +, \circ \big )$.
	\label{t:8} 
\end{thm}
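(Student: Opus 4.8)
The plan is to realize the required structure as an explicit $\phi$-derivation of the field $F = GF(q^n)$, where the coupling map is built directly from the residue system produced by the previous Lemma. First I would record the number-theoretic fact underpinning the construction: for a Dickson pair $(q,n)$ one has $n \mid q^n - 1$. This follows from the lifting-the-exponent estimates $v_p(q^n-1) = v_p(q-1) + v_p(n)$ for odd primes $p \mid n$ (legitimate since $p \mid q-1$), together with the separate $2$-adic count in which the third Dickson condition ($q \equiv 3 \pmod 4 \Rightarrow 4 \nmid n$) is exactly what guarantees $v_2(q^n - 1) \geq v_2(n)$. Consequently the reduction $j \mapsto j \bmod n$ is well defined on exponents of any generator of $F^\times$, and the value $i(n) = \tfrac{q^n-1}{q-1}$ satisfies $i(n) \equiv 0 \pmod n$, so that $i(k+n) = i(k) + q^k i(n) \equiv i(k) \pmod n$; i.e. $i(\cdot) \bmod n$ is $n$-periodic in its index.

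Next I would fix the Frobenius automorphism $\sigma \colon x \mapsto x^q$, which generates $\mathrm{Gal}(F/GF(q)) \cong \mathbb{Z}/n\mathbb{Z}$, together with a generator $g$ of the cyclic group $F^\times$ (of order $q^n-1$). The previous Lemma shows that $k \mapsto i(k) \bmod n$ restricts to a bijection $\{0,1,\dots,n-1\} \to \mathbb{Z}/n\mathbb{Z}$ (with $i(0)=0$); let $\beta^{-1}$ denote its inverse. I then define the candidate coupling map by $\phi_{m} = \sigma^{E(m)}$, where $E(g^{j}) = \beta^{-1}(j \bmod n)$, i.e. $E(g^{j})$ is the unique $k \in \{0,\dots,n-1\}$ with $i(k) \equiv j \pmod n$. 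Each $\phi_m$ is a field automorphism, and $\phi_1 = \sigma^0 = \mathrm{id}$.

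The crux is to verify the coupling identity $\phi_a \circ \phi_b = \phi_{\phi_a(b)\cdot a}$. Writing $a = g^{\mu}$, $b = g^{\lambda}$ and $k_\mu = E(a)$, $k_\lambda = E(b)$, the left side is $\sigma^{k_\mu + k_\lambda}$, while $\phi_a(b)\cdot a = g^{\lambda q^{k_\mu} + \mu}$, so the right side is $\sigma^{E(g^{\lambda q^{k_\mu} + \mu})}$. Reducing the exponent modulo $n$ and using $\mu \equiv i(k_\mu)$, $\lambda \equiv i(k_\lambda)$ gives
\[
\lambda q^{k_\mu} + \mu \equiv i(k_\lambda)\, q^{k_\mu} + i(k_\mu) = i(k_\mu + k_\lambda) \pmod n,
\]
where the last equality is the geometric-series identity $i(a+b) = i(a) + q^{a} i(b)$. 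By the $n$-periodicity of $i(\cdot)\bmod n$ this equals $i\big((k_\mu + k_\lambda) \bmod n\big)$, whence $E(g^{\lambda q^{k_\mu} + \mu}) = (k_\mu + k_\lambda)\bmod n$ and both sides agree. This step is the main obstacle, and it is precisely here that the Lemma does all the work: the telescoping identity turns the twisted product of exponents into a single index, and the completeness and periodicity of the residue system $\{i(k)\}$ make $E$ additive along the cocycle.

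Once the coupling identity holds, the Theorem following Lemma \ref{lemm} immediately yields that $(F, +, \circ_\phi)$ is a nearfield; it has the same underlying set as $GF(q^n)$, hence order $q^n$, and by Definition \ref{t:44} it is the Dickson nearfield $DN(q,n) = GF(q^n)^{\phi} = \big(GF(q^n), +, \circ\big)$, as required. Finally, whenever $n > 1$ the map $\phi$ is non-trivial (its image is all of $\langle \sigma\rangle$ since $E$ is surjective), so $\circ_\phi$ differs from the field multiplication and the resulting nearfield is proper.
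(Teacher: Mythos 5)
Your proof is correct and follows essentially the same route as the paper: both build the coupling map by sending an element whose exponent is congruent to $i(k)=\frac{q^k-1}{q-1}$ modulo $n$ to the $k$-th power of the Frobenius $x\mapsto x^q$, verify the coupling identity via the telescoping identity $i(a+b)=i(a)+q^{a}i(b)$ together with the residue-system Lemma, and then invoke the $\phi$-derivation theorem. The differences are presentational: you work directly with exponents modulo $n$ and explicitly justify well-definedness via $n\mid q^n-1$ (a point the paper glosses over when asserting that $\langle g^n\rangle$ has index $n$ in $F^{\times}$), whereas the paper packages the same map as $\lambda\circ\pi$ through the quotient $F^{\times}/\langle g^n\rangle$.
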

\begin{proof} \quad 
	\begin{itemize}
		\item Let $(q,n)$ be a Dickson pair where $q=p^l$.
		\item Let $(F,+, \cdot)$ be a finite field with characteristic $p$ where $p$ is prime. There exists an integer $ln \geq 1$ such that $ \vert F \vert = p^{ln}.$ This field is called the Galois Field $F:=GF(q^n)=GF(p^{ln})$ containing $q^n$ elements. The multiplicative group $ ( F^{ \times}, \cdot)$ is cyclic (the proof can be found in \cite{lidl1994introduction}). So $ F^{ \times}$ is generated by an element denoted $g$, i.e. $  F^{ \times} = \langle g \rangle$.  Let us consider $H$, the subgroup of $ ( F^{ \times}, \cdot)$ generated by $g^n$, i.e., $ H= \langle g^n \rangle$.  So $  F^{ \times} / H $ is the group of all right cosets of $H$. Each coset is of the form $Hg^j = \lbrace hg^j, \forall g^j \in F^{\times} \rbrace$ where $j =0,\ldots,n-1$. Since $H$ is a subgroup of $F^{\times}$, the number of right cosets of $H$ in $F^{\times}$ is the index $ (F^{\times}:H)$ of $H$ in $F^{\times}$. Since $F^{\times}$ is finite $ (F^{\times}:H)$ is finite and by Lagrange's Theorem $  (F^{\times}:H) = \vert F^{\times}/H \vert =n= \frac{ \vert F^{\times} \vert }{\vert H \vert} $.
		Thus   
		\begin{align*}
			F^{ \times} / H =\big \lbrace  Hg^{j} : 0 \leq j \leq n-1 \big \rbrace= \big \lbrace  Hg^{0},Hg^{1},\ldots, Hg^{n-1} \big \rbrace.  
		\end{align*}
		Let $i(k) = \frac{q^k-1}{q-1}$ for $k=1,\ldots,n$. It can be shown that the set  $\big \lbrace i(1),i(2), \ldots, i(n) \big \rbrace$ forms a complete set of the powers of the coset representatives because the set $\big \lbrace i(1),i(2), \ldots, i(n) \big \rbrace$ of residues modulo $n$ give the set $ \big \lbrace 0,1,\ldots,n-1\big \rbrace $. Therefore $  F^{ \times} / H $ can also be represented as follows
		\begin{align*}
			F^{ \times} / H  & = \big \lbrace  Hg^{i(1)},Hg^{i(2)},\ldots, Hg^{i(n)} \big \rbrace  = \big \lbrace  Hg^{\frac{q^1-1}{q-1}},Hg^{\frac{q^2-1}{q-1}},\ldots, Hg^{\frac{q^n-1}{q-1}} \big \rbrace.
		\end{align*}
		\item Now let us consider 
		\begin{align*}
			\alpha : \quad & F \to F  \\
			& f \mapsto f^q
		\end{align*}
		which is a power of the Frobenius automorphism, i.e., $ \alpha=\psi ^l$ (by Definition \ref{t:7}).
		\item The map 
		\begin{align*}
			\lambda : \quad & F^{ \times} / H  \to Aut(F,+,\cdot)  \\
			& Hg^{\frac{q^k-1}{q-1}} \mapsto \alpha^k
		\end{align*}
		is well-defined: suppose $Hg^{\frac{q^{k_1}-1}{q-1}}, Hg^{\frac{q^{k_2}-1}{q-1}} \in F^{\times}/H   $ such that $ Hg^{\frac{q^{k_1}-1}{q-1}}=Hg^{\frac{q^{k_2}-1}{q-1}}$. Then 
		\begin{align*}
			& Hg^{\frac{q^{k_1}-1}{q-1}}=Hg^{\frac{q^{k_2}-1}{q-1}} 
			\Rightarrow g^{\frac{q^{k_1}-1}{q-1}} =g^{\frac{q^{k_2}-1}{q-1}} 
			\Rightarrow \frac{q^{k_1}-1}{q-1} =\frac{q^{k_2}-1}{q-1}  
			\Rightarrow k_1=k_2  \\
			& \Rightarrow \alpha^{k_1}=\alpha^{k_2} 
			\Rightarrow \lambda \big (Hg^{\frac{q^{k_1}-1}{q-1}} \big ) = \lambda \big (Hg^{\frac{q^{k_2}-1}{q-1}} \big ). 
		\end{align*}
		\item The map  \begin{align*}
			\pi  : \thickspace & F^{ \times}  \to F^{ \times} / H  \\
			&  f \mapsto Hg^{\frac{q^k-1}{q-1}}
		\end{align*} is a canonical bijection which satisfies the homomorphism property. So $\pi$ is a canonical bijection.
		\item The composition map is defined as
		\begin{align*}
			\phi = \lambda \circ \pi  : \thickspace & F^{ \times}  \to  \textit{Aut}(F,+,\cdot) \\
			& f \mapsto \alpha^k \thickspace \mbox{for} \thickspace f \in Hg^{\frac{q^k-1}{q-1}}
		\end{align*}  which is a coupling map on $ F^ {\times}.$
		We need to show that $DN(q,n)=F^{\phi}$  i.e., $ \phi _a \circ \phi _b = \phi _{ \phi _a (b) a}$ for all $ a,b \in F^{\times}.$ 
		
		Since $F^{\times}/H$ can be presented as $  F^{ \times} / H   = \big \lbrace  Hg^{i(1)},Hg^{i(2)},\ldots, Hg^{i(n)} \big \rbrace$ then 
		\begin{align*}
			F^{\times}= Hg^{i(1)} \cup Hg^{i(2)} \cup \cdots \cup Hg^{i(n)}.
		\end{align*}
		Therefore the elements of $F^{\times}$ can be written as $g^{\frac{q^k-1}{q-1}+n\delta}$ for $ \delta \in \mathbb{N}$ and $1 \leq k \leq n$. It follows that if $a=g^{i(k_1)+n \delta_1}$ and $b=g^{i(k_2)+n \delta_2}$, then 
		$ \pi (a) = Hg^{\frac{q^{k_1}-1}{q-1}}, \thickspace \pi (b) = Hg^{\frac{q^{k_2}-1}{q-1}}.$ So $ \phi_a= ( \lambda \circ \phi) (a) = \alpha^{k_1} $ and  $ \phi_b=( \lambda \circ \phi) (b)= \alpha^{k_2} $. It follows that $ \phi _a \circ \phi _b=\alpha^{k_1} \circ \alpha^{k_2} =\alpha^{k_1+ k_2}. $ 
		
		Also $  \phi _a (b) a = \alpha^{k_1} (b) \cdot a = b^{q^{k_1}} a = g^{ \big ( \frac{q^{k_2}-1}{q-1} + n \delta_2 \big )q^{k_1} }g^{\frac{q^{k_1}-1}{q-1}+ n \delta_1}= $
		
		 $g^{\frac{q^{k_1+k_2}-q^k_1+q^{k_1}-1}{q-1} +n \delta_2 q^{k_1}+n \delta_1}=g^{\frac{q^{k_1+k_2}-1}{q-1} +n ( \delta_1+ q^{k_1}\delta_2)}$. It follows that $ \phi _{ \phi _a (b) a} = \alpha^{k_1+ k_2} $. Thus $\phi _a \circ \phi _b = \phi _{ \phi _a (b) a}$ . 
		
		Thus if we consider the field $F:=(GF(q^n),+,\cdot )$ and the coupling map $ \phi$ such that $DN(q,n) = F^{\phi} = (GF(q^n),+,\circ _{\phi})$ (as a $\phi-$derivation of the finite field $F$). Then by Definition \ref{t:44} $DN(q,n) $ is a Dickson nearfield containing $q^n$ elements.
	\end{itemize} 
\end{proof}
\begin{lem}
	For all Dickson pair $(q,n)$ where $n \neq 1$, any Dickson nearfields constructed by the Galois Field $GF(q^n)$ are proper finite nearfields.
\end{lem}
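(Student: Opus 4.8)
The plan is to prove that the derived multiplication $\circ_\phi$ is not commutative; since Theorem~\ref{t:8} already guarantees that $DN(q,n)$ is a nearfield, non-commutativity of $\circ_\phi$ immediately forces $DN(q,n)$ to fail to be a field, i.e.\ to be a proper (and visibly finite, of order $q^n$) nearfield. The engine of the argument is that for $n\neq 1$ the map $\alpha:f\mapsto f^q$ is a non-identity automorphism of $GF(q^n)$: it generates the cyclic Galois group of $GF(q^n)$ over $GF(q)$, so it has order exactly $n$, and its fixed field is precisely $GF(q)$.

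First I would read off from the construction of $\phi$ in Theorem~\ref{t:8} the two coset evaluations the computation needs. Since $i(n)=\frac{q^n-1}{q-1}\equiv 0\pmod{n}$, the subgroup $H$ is itself the coset $Hg^{i(n)}$, so $\phi_a=\alpha^n=\mathrm{id}$ for every $a\in H$; and since $i(1)=1$, we have $\phi_b=\alpha$ for every $b\in Hg=Hg^{i(1)}$.

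The key elementary step is to exhibit an element $a\in H$ that is actually moved by $\alpha$, that is, with $a^q\neq a$. An element of $F^\times$ is fixed by $\alpha$ exactly when it lies in the fixed field, i.e.\ in $GF(q)^\times$, a set of size $q-1$. On the other hand $\vert H\vert=\vert F^\times\vert/n=(q^n-1)/n$, and because $\frac{q^n-1}{q-1}=1+q+\cdots+q^{n-1}$ is a sum of $n$ terms, at least one of which exceeds $1$ when $q\geq 2$ and $n\geq 2$, we obtain $(q^n-1)/n>q-1$. Hence $H\not\subseteq GF(q)$ and the desired $a$ exists.

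Finally, fixing such an $a\in H$ and any nonzero $b\in Hg$, the definition of $\circ_\phi$ gives $a\circ_\phi b=\phi_b(a)\cdot b=\alpha(a)\cdot b=a^q b$, whereas $b\circ_\phi a=\phi_a(b)\cdot a=\mathrm{id}(b)\cdot a=b\cdot a=a\cdot b$, using commutativity of the underlying field product. As $a^q\neq a$ and $b\neq 0$, these differ, so $\circ_\phi$ is non-commutative and $DN(q,n)$ is a proper nearfield. The only real content is the counting inequality $(q^n-1)/n>q-1$, and this is exactly where the hypothesis $n\neq 1$ does its work: when $n=1$ one has $H=F^\times$, $\alpha$ restricts to the identity on $GF(q)$, and $\circ_\phi$ collapses to ordinary multiplication, so $DN(q,1)=GF(q)$ is genuinely a field.
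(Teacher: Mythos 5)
Your proof is correct, and its overall skeleton matches the paper's: properness is established by exhibiting a failure of commutativity of $\circ_\phi$, with one factor taken from $H$ (where $\phi$ acts as $\alpha^n=\mathrm{id}$, since $i(n)\equiv 0 \pmod{n}$) and the other from the coset $Hg$ (where $\phi$ acts as $\alpha$). The genuine difference is in how the two products are certified to be distinct. The paper takes explicit witnesses $a=g^n\in H$ and $b=g\in Hg$, computes $g^n\circ g=g^{nq+1}$ and $g\circ g^n=g^{n+1}$, and rules out equality by an order argument: $g^{n(q-1)}=1$ would force $(q^n-1)\mid n(q-1)$, hence $\frac{q^n-1}{q-1}\mid n$, contradicting $1+q+\cdots+q^{n-1}>n$. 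You instead argue by counting: the fixed points of $\alpha$ in $F^{\times}$ are exactly $GF(q)^{\times}$, which has $q-1$ elements, while $\vert H\vert=(q^n-1)/n>q-1$, so some $a\in H$ satisfies $a^q\neq a$, whence $a\circ_\phi b=a^q b\neq ab=b\circ_\phi a$ by cancellation in the field. Both arguments bottom out at the very same inequality $1+q+\cdots+q^{n-1}>n$; indeed the paper's computation is implicitly verifying that its particular element $g^n$ is not fixed by $\alpha$, so your pigeonhole step trades the paper's explicit witness for a non-constructive but cleaner Galois-theoretic existence statement, and it also isolates transparently why $n=1$ must be excluded. One dependency you should make explicit rather than treat as free: the count $\vert H\vert=(q^n-1)/n$ requires $n\mid q^n-1$, which is precisely what the paper's residue lemma (giving $i(n)\equiv 0\pmod{n}$), or equivalently the index computation $(F^{\times}:H)=n$ in the proof of Theorem~\ref{t:8}, supplies.
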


\begin{proof}
	From finite Dickson construction 
	\begin{align*}
		DN(q,n):= GF(q^n)^{\phi} = \big (GF(q^n), +, \circ \big ).
	\end{align*}
	We would like to show that $\big (GF(q^n), +, \circ \big )$ is not fields i.e., there exists $a,b \in (GF(q^n) $ such that $a \circ b \neq b \circ a.$
	The coupling map is 
	\begin{align*}
		\phi = \lambda \circ \pi  : \quad & F^{ \times}  \to  \textit{Aut}(F,+,\cdot) \\
		& f \mapsto \alpha^k \quad \mbox{for} \quad k=1,\cdots,n. \\
		\Leftrightarrow \phi  : f \mapsto \begin{cases}
			\alpha  \quad   \text{if $f \in Hg^{\frac{q-1}{q-1}} $} \\
			\alpha^2    \quad  \text{if $ f \in Hg^{\frac{q^2-1}{q-1}}$} \\ 
			\vdots \quad  \quad \vdots \\ 
			\alpha^n  \quad   \text{if $f \in Hg^{\frac{q^n-1}{q-1}}$}.
		\end{cases} 
	\end{align*} 
	For $a,b \in (GF(q^n)$
	\begin{align*}
		a \circ _{\phi} b= 
		\begin{cases}
			\phi_a(n) \cdot b \quad \text{if $ b \neq 0 $} \\ 
			0 \quad  \text{if $b=0$}
		\end{cases} =
		\begin{cases}
			\alpha (a) \cdot b  \quad  \text{if $ b \in Hg^{\frac{q-1}{q-1}} $} \\
			\alpha^2 (a) \cdot b \quad    \text{if $ b \in Hg^{\frac{q^2-1}{q-1}} $} \\ 
			\vdots \quad \quad \vdots \\
			\alpha^n (a) \cdot b \quad  \text{if $ b \in Hg^{\frac{q^n-1}{q-1}} $} 
		\end{cases} = 
		\begin{cases}
			a^q	 \cdot b   \quad  \text{if $ b \in Hg^{\frac{q-1}{q-1}} $} \\
			a^{q^2} \cdot b \quad    \text{if $ b \in Hg^{\frac{q^2-1}{q-1}} $} \\ 
			\vdots \quad \quad \vdots \\
			a^{q^n} \cdot b \quad  \text{if $ b \in Hg^{\frac{q^n-1}{q-1}} $} 
		\end{cases} 
	\end{align*}
	Let $a=g^n \in Hg^{\frac{q^n-1}{q-1}}$ and $ b=g Hg^{\frac{q^1-1}{q-1}}$
	
	We have 
	\begin{align*}
		g^n \circ g & = \alpha^1(g^n)g \\
		& = (g^n)^qg \\
		& = g^{nq+1}.
	\end{align*}
	Also 
	\begin{align*}
		g \circ g^n & =\alpha ^n (g) g^n \\
		& =g^{n+1} \quad \mbox{because} \quad \alpha^n =id.
	\end{align*}
	Assume that $g^{nq+1}= g^{n+1},$ then $ g^{n(q-1)}=1.$
	
	But since $ F^{\times} = \langle g \rangle$, then $ \texttt{ord }(g) =q^n-1.$ 
	
	It follows that if $g^t=1 \Rightarrow q^n-1 / t$.
	
	Moreover since 
	\begin{align*}
		g^{n(q-1)}=1 & \Rightarrow q^n-1 / n(q-1) \\
		& \Rightarrow 1+q+\cdots+q^{n-1} /n
	\end{align*}
	But $q=p^l > 1$ so $1+q+\cdots+q^{n-1} > n$. It follows that $1+q+\cdots+q^{n-1}$ does not divides $n.$
	
	Thus $g^{n(q-1)} \neq 1.$ This means that $g^n \circ g \neq g \circ g^n$.
	
	There exists  $a=g^n \in Hg^{\frac{q^n-1}{q-1}}$ and $ b=g Hg^{\frac{q^1-1}{q-1}}$ such that $n \circ b \neq a \circ b.$ 	Thus the finite Dickson nearfields associated to the pair $(q,n)$ where $q \neq 1$ are proper finite nearfields (not fields).
\end{proof}

\begin{thm} \cite{dickson1905finite}
	By taking all pairs of Dickson numbers, all finite Dickson nearfields arise in the way described in Theorem \ref{t:8}. 
\end{thm}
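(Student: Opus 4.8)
The plan is to prove the converse of the construction: starting from an arbitrary finite Dickson nearfield $N=F^{\phi}$, I would recover a Dickson pair $(q,n)$ and identify $N$ with the nearfield $DN(q,n)$ of Theorem \ref{t:8}. First I would fix the ambient data. Since $N$ is finite and $F$ is a field, $F=GF(p^{m})$ for a prime $p$, and $\textit{Aut}(F,+,\cdot)$ is the cyclic Galois group generated by the Frobenius $\psi$. The Dickson group $G=\{\phi_{a}:a\in F^{\times}\}$ is a subgroup of this cyclic group by Lemma \ref{lemm}, hence cyclic; writing $G=\langle\alpha\rangle$ with $\alpha=\psi^{l}$ and $|G|=n$, and setting $q=p^{l}$, I obtain $\alpha(x)=x^{q}$ and $|F|=p^{ln}=q^{n}$. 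Encoding the coupling map by $\phi_{a}=\alpha^{s(a)}$ produces a function $s:F^{\times}\to\mathbb{Z}/n\mathbb{Z}$, and since $\alpha\in G$ this map is surjective.

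The key reduction is that $s$ becomes a genuine homomorphism for the new multiplication. From the coupling identity $\phi_{a}\circ\phi_{b}=\phi_{\phi_{a}(b)\,a}$ one reads off $s(a\circ_{\phi}b)=s(a)+s(b)\pmod n$, so $s:(N^{\times},\circ_{\phi})\to(\mathbb{Z}/n\mathbb{Z},+)$ is a surjective group homomorphism. Its kernel $K$ is normal of index $n$, which forces $|K|=(q^{n}-1)/n$ and in particular $n\mid q^{n}-1$. On $K$ the two multiplications agree, since $a\circ_{\phi}b=a^{q^{s(b)}}b=ab$ whenever $s(b)=0$, so $K$ is the unique subgroup of $(F^{\times},\cdot)$ of order $(q^{n}-1)/n$, namely $H=\langle g^{n}\rangle$ for a generator $g$ of $F^{\times}$. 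A short computation then shows $s$ is constant on the multiplicative cosets of $H$, so it descends to a bijection $\bar{s}:F^{\times}/H\to\mathbb{Z}/n\mathbb{Z}$.

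Next I would solve for $\bar{s}$ explicitly. Writing $\rho=\bar{s}^{-1}$ and substituting coset representatives into the coupling identity, the cocycle relation collapses to the recurrence $\rho(a+b)\equiv\rho(a)+q^{a}\rho(b)\pmod n$ with $\rho(0)=0$; iterating from $b=1$ yields $\rho(k)\equiv\rho(1)\,\frac{q^{k}-1}{q-1}\pmod n$. Thus, up to the unit $u=\rho(1)$, the coupling map is forced to be the one of Theorem \ref{t:8}, and the value of $u$ merely reflects the choice of generator $g$, since replacing $g$ by $g^{w}$ with $\gcd(w,q^{n}-1)=1$ realizes every unit modulo $n$. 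Because $\rho$ must be a bijection of $\mathbb{Z}/n\mathbb{Z}$ and $\rho(k)=u\,\frac{q^{k}-1}{q-1}$, the residues $\big\{\frac{q^{k}-1}{q-1}:1\le k\le n\big\}$ are forced to form a complete residue system modulo $n$, exactly the property established for Dickson pairs earlier in this section.

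Finally I would deduce the defining conditions of a Dickson pair from this complete-residue-system property, which is the main obstacle and the only genuinely arithmetic step. That $q=p^{l}$ is a prime power is built in, and $n\mid q^{n}-1$ gives $\gcd(q,n)=1$. For the condition that every prime $r\mid n$ divides $q-1$: if $r\nmid q-1$ then $q$ has multiplicative order $e\mid r-1$ modulo $r$, so $e<r$, and the partial sums $1+q+\cdots+q^{k-1}$ are periodic of period $e$ modulo $r$, taking at most $e<r$ values; reducing a complete system modulo $r$ would then be impossible, a contradiction. For the condition $q\equiv3\pmod 4\Rightarrow 4\nmid n$: if $4\mid n$, reducing modulo $4$ and using $q\equiv-1$ shows $\frac{q^{k}-1}{q-1}\equiv 0$ or $1\pmod 4$ for all $k$, so the residues $2,3$ are never attained and completeness modulo $4$ fails. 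Hence $(q,n)$ is a Dickson pair, and assembling the pieces identifies $N$ with $DN(q,n)=GF(q^{n})^{\phi}$ as constructed in Theorem \ref{t:8}. The points to handle with care are the surjectivity and bijectivity bookkeeping, ensuring index exactly $n$ and that $\bar{s}$ is a bijection rather than merely a surjection, and confirming that varying the generator accounts for all admissible coupling maps.
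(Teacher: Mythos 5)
Your proposal is correct in outline, but it is worth noting that the paper does not actually prove this theorem at all: its ``proof'' is a single line deferring to Dickson's 1905 paper. So your argument is genuinely different content rather than a variant of the paper's route. What you do is reverse-engineer the construction of Theorem \ref{t:8}: starting from an arbitrary finite $F^{\phi}$, you extract $n$ as the order of the (necessarily cyclic) Dickson group, set $q=p^{l}$ with $\alpha=\psi^{l}$ the generator of that group, observe that the coupling identity makes $s:a\mapsto\log_{\alpha}\phi_{a}$ a surjective homomorphism from $(N^{\times},\circ_{\phi})$ onto $\mathbb{Z}/n\mathbb{Z}$ whose kernel must be $H=\langle g^{n}\rangle$, and then show the induced bijection is forced by the cocycle recurrence to be $k\mapsto u\,\frac{q^{k}-1}{q-1}$ for a unit $u$. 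This has two attractive byproducts the paper never states: it proves the converse of the paper's earlier residue lemma (completeness of $\big\lbrace\frac{q^{k}-1}{q-1}\big\rbrace$ modulo $n$ forces the Dickson-pair conditions, via your order-of-$q$-mod-$r$ and mod-$4$ arguments, both of which check out), and it explains conceptually why only Dickson pairs can occur. Two steps you should fill in when writing this up: first, the claim that the unit $u$ is absorbed by replacing $g$ with $g^{w}$ requires exhibiting $w\equiv u \pmod{n}$ with $\gcd(w,q^{n}-1)=1$, which needs a short Chinese-remainder argument (choose $w\equiv u \pmod n$ and $w\equiv 1$ modulo the primes dividing $q^{n}-1$ but not $n$; this is consistent because $\gcd(u,n)=1$); second, the identification of the $\circ_{\phi}$-cosets of the kernel with the multiplicative cosets of $H$ uses that $H$ is stable under field automorphisms (being the unique subgroup of its order in the cyclic group $F^{\times}$), which deserves to be said explicitly. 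With those details added, your argument is a complete, self-contained proof of the statement, in contrast with the paper's bare citation.
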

\begin{proof}
	See \cite{dickson1905finite} for more details.
\end{proof}
\begin{rem} \quad
	\begin{itemize}
		\item For $n=1$, we have that $ F^{\times} = \langle g \rangle$ and $H = \langle g \rangle$. It follows that $ F^{\times} /H = \big \lbrace H \big \rbrace$ and then the coupling map is the identity since $\phi : f \mapsto \alpha= f^q=f$. Thus all finite fields  arise from the Dickson pair $(q,1)$. 
		\item Note that for all pairs $(q,n)$ where $n=1$ there exists a unique finite Dickson nearfield of order $q=p^l$ which are also finite fields.
		\item For $n \neq 1$ there exists finite proper nearfields of order $q^n.$ In this case, they are not necessarily unique finite Dickson nearfields of order $q^n.$ The number of non-isomorphic Dickson nearfields derived by this construction  (for different choices of $g$) is given by $ \frac{ \varphi (n)}{i}$, where $ \varphi $ is the Euler-function and $i$ is the order of $p (\text{mod } n ).$ 
		\item In general Dickson nearfields can be either finite or infinite. 
	\end{itemize}
\end{rem}

\section{Concluding comments}
As differences, for a finite field up to
isomorphism, there exists a unique finite field of order $p^n$ , but for a finite Dickson nearfield that arises
from the pair $(q, n)$, there does not exist a unique finite Dickson nearfield. The multiplicative group of
a finite field is cyclic but the multiplicative group of a Dickson nearfield is metacyclic.


\begin{thebibliography}{99}






\bibitem{beidleman1966near}{\sc  James C. Beidleman}, On near-rings and near-ring modules. PhD thesis, Pennsylvanian State University, 1966.


\bibitem{djagbahowell18}
{\sc P.Djagba and K.-T. Howell}
The subspace structure of finite dimensional near-vector spaces.
\emph{Linear and Multilinear Algebra}, pages 1--21,  2019.
doi: 10.1.80/03081087.2019.1582610.

\bibitem{djagba}{\sc P. Djagba}, On the generilized distributive set of a finite nearfeild,   {\it Journal of Algebra },  2020.

\bibitem{djagbathesis}{\sc P. Djagba}, Contributions to the theory of Beidleman near-vector spaces,   {\it PhD thesis, Stellenbosch University},  2019.


\bibitem{dickson1905finite} {\sc Dickson Leonard Eugene}, On finite algebras {\it Nachrichten von der Gesellschaft der Wissenschaften zu G{\"o}ttingen, Mathematisch-Physikalische Klasse, 358-393,} 1905.


\bibitem{ellerskarzel1964}{\sc Erich Ellers und Helmut Karzel}, Endliche Inzidenzgruppen,   {\it Abhandl.Math.Seminar Hamburg 27} (1964): 250-264. 





\bibitem{meldrum1985near}{\sc Meldrum, John DP}, Near rings and their links with groups, Number 134, {\it Pitman Advanced Publishing Program}, 1985.


\bibitem{pilz2011near}{\sc Pilz Gunter}, Near-rings: the theory and its applications, volumes 23. Elsevier, 2011.


\bibitem{wahling1987theorie}{\sc W{\"a}hling, Heinz}, Theorie der Fastk{\"o}rper,   {\it Thales Verlag, volume 1,} 1987. 


\bibitem{zassenhauss1935}{\sc H. Zassenhaus,} über endliche fastkörper. {\it In Abhandlungen aus dem Mathematischen Seminar der Universität Hamburg, volume 11, pages 187–220.} Springer, 1935.



\bibitem{djagbacenter}{\sc P. Djagba}, On the center of a finite Dickson nearfield,   {\it  arXiv preprint arXiv:2003.08306 },  2020.
\end{thebibliography}
\end{document}